
\documentclass[12pt]{amsart}
\date{}

\setlength{\textwidth}{16truecm}
\setlength{\textheight}{24.4truecm}
\setlength{\oddsidemargin}{0pt} \setlength{\evensidemargin}{0pt}
\setlength{\topmargin}{-20pt}

\newtheorem{theorem}{Theorem}
\newtheorem{lemma}{Lemma}
\newtheorem{remark}{Remark}
\newtheorem{corollary}{Corollary}
\newtheorem{proposition}{Proposition}

\newtheorem{example}{Example}

\newcommand{\e}{\varepsilon}

\newcommand{\strilka}{\kern0pt\uparrow\kern-3pt}

\newcommand{\IN}{{\mathbb N}}
\newcommand{\IR}{{\mathbb{R}}}
\newcommand{\w}{\omega}
\newcommand{\IT}{{\mathbb{T}}}

\newcommand{\IZ}{{\mathbb{Z}}}

\begin{document}
\title[On subgroups of second countable divisible abelian groups]
{Each second countable abelian group is a subgroup of a second
countable divisible group}
\author{Taras Banakh}
\email{tbanakh@franko.lviv.ua}
\author{Liubomyr Zdomsky\u\i}
\address{Department of Mathematics, Ivan Franko Lviv National University,
Universytetska 1, Lviv, 79000, Ukraina}
\keywords{abelian group, divisible group, pseudonorm, second countable group,
boundedness index, extension of topologies, Polish group, analytic
space, Axiom of Choice, Axiom of Determinacy}
\subjclass{03E25, 03E60, 20K35, 20K45, 22A05, 54A25, 54A35, 54H05, 54H11}
\begin{abstract}
It is shown that each pseudonorm $|\cdot|_H$ defined on a subgroup
$H$ of an abelian group $G$ can be extended to a pseudonorm
$|\cdot|_G$ on $G$ such that the densities of the pseudometrizable
topological groups $(H,|\cdot|_H)$ and $(G,|\cdot|_G)$ coincide.
We derive from this that any Hausdorff $\omega$-bounded group
topology on $H$ can be extended to a Hausdorff $\omega$-bounded
group topology on $G$. In its turn this result implies that each
separable metrizable abelian group $H$ is a subgroup of a
separable metrizable divisible group $G$. This result essentially
relies on the Axiom of Choice and is not true under the Axiom of
Determinacy (which contradicts to the Axiom of Choice but implies
the Countable Axiom of Choice).
\end{abstract}

\maketitle

This paper was motivated by the following question having its
origin in functional analysis (see \cite{PZ}, \cite{BPZ}): {\em Is
it true that every metrizable separable abelian topological group
with no torsion is a subgroup of a metrizable separable divisible
abelian group with no torsion?}

{\em From now on all groups considered in the paper are
commutative}. We recall that a group $G$ is {\em divisible} (resp.
{\em has no torsion}) if for any element $a\in G$ and a positive
integer $n$ the equation $nx=a$ has a solution $x\in G$ (resp.
does not have two distinct solutions in $G$). According to the
Baer Theorem \cite[21.1]{F} each divisible group $G$ is {\em
injective} in the sense that each homomorphism $h:B\to G$ defined
on a subgroup $B$ of a group $A$ can be extended to a homomorphism
$\bar h:A\to G$. A classical result of the theory of infinite
abelian groups \cite[24.1]{F} asserts that each group (with no
torsion) is a subgroup of a divisible group (with no torsion).
This result allows us to reduce the above question to the
following one: {\em Can every separable group topology on a
subgroup $H$ of a group $G$ be extended to a separable group
topology on $G$?}

Note that without the separability requirement this problem is
trivial: just announce $H$ to be an open subgroup of $G$ and take
the neighborhood base at the origin of $H$ for a neighborhood base
at the origin in the group $G$. However if the quotient group
$G/H$ is uncountable such an extension leads to an unseparable
topology on $G$. So, another less direct approach should be
developed.

A classical result in the theory of topological groups asserts
that each group topology is generated by a family of continuous
pseudonorms, see \cite[\S2]{Tk}. This observation allows us to
reduce the problem of extending group topologies to the problem of
extending pseudonorms. As usual, under a {\em (continuous)
pseudonorm} of a (topological) group $G$ we understand a
(continuous) non-negative function $|\cdot|:G\to [0,\infty)$ such
that $|0|=0$ and $|x-y|\le |x|+|y|$ for all $x,y\in G$. A
pseudonorm $|\cdot|$ is a {\em norm} provided $|x|=0$ implies
$x=0$. Each pseudonorm $|\cdot|$ on a group $G$ generates a group
topology on $G$ whose neighborhood base at the origin consists of
the $\e$-balls $B_{|\cdot|}(\e)=\{x\in G:|x|<\e\}$, $\e>0$. The
group $G$ endowed with this topology turns into a topological
group denoted by $(G,|\cdot|)$.

Given a topological space $X$ by $d(X)$ we denote its {\em
density} (that is the smallest size of a dense subset of $X$), by
$w(X)$ its {\em weight} (that is the smallest size of a base of
the topology of $X$) and by $\chi(X)$ its {\em character} (i.e., a
smallest cardinal $\tau$ such that any point $x\in X$ possesses a
neighborhood base $\mathcal B$ of size $|\mathcal B|\le\tau$). It
is known that $d(X)=w(X)$ for any (pseudo)metrizable topological
space.

Now we are able to formulate the main result of this paper.

\begin{theorem}\label{main} Any pseudonorm $|\cdot|_H$ defined on a
subgroup $H$ of an abelian group $G$ can be extended to a
pseudonorm $|\cdot|_G$ on $G$ so that
$d(H,|\cdot|_H)=d(G,|\cdot|_G)$.
\end{theorem}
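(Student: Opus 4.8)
The plan is to prove the two inequalities $d(H,|\cdot|_H)\le d(G,|\cdot|_G)$ and $d(G,|\cdot|_G)\le d(H,|\cdot|_H)$ separately, where the first is automatic for \emph{any} extension and all the work goes into the second. Write $\kappa=d(H,|\cdot|_H)$. For the first inequality, once $|\cdot|_G$ restricts to $|\cdot|_H$ on $H$, the subspace topology induced on $H$ by $(G,|\cdot|_G)$ is exactly the topology of $(H,|\cdot|_H)$; since $d=w$ for pseudometrizable spaces and weight does not increase when passing to a subspace, $\kappa=w(H)\le w(G)=d(G,|\cdot|_G)$. Hence it suffices to construct an extension with $d(G,|\cdot|_G)\le\kappa$.

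The main device will be a \emph{pullback principle}: if $\Phi\colon G\to X$ is a homomorphism into a pseudonormed abelian group $(X,\|\cdot\|)$ with $\|\Phi(h)\|=|h|_H$ for every $h\in H$, then $|g|_G:=\|\Phi(g)\|$ is a pseudonorm on $G$ extending $|\cdot|_H$, and $d(G,|\cdot|_G)\le d(X,\|\cdot\|)$. Indeed, $\Phi$ factors through an isometric embedding of the metric (Kolmogorov) quotient $G/\{|\cdot|_G=0\}$ into $X$, and this metric quotient has the same density as $(G,|\cdot|_G)$; therefore $d(G,|\cdot|_G)=d(\Phi(G))\le d(X)$. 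Thus the problem reduces to finding a single pseudonormed group $(X,\|\cdot\|)$ of density at most $\kappa$ together with a norm-preserving homomorphism of $H$ into it that extends over all of $G$.

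Here I would exploit divisibility exactly as in the Baer/injectivity theme of the introduction. Take the completion $\hat H$ of the normed group $H/\{|\cdot|_H=0\}$; it is a complete metric abelian group with $d(\hat H)=\kappa$, and it contains $H$ norm-preservingly via the canonical map $\phi\colon H\to\hat H$. Now embed $\hat H$ into a \emph{divisible} group $X\supseteq\hat H$ carrying a pseudonorm $\|\cdot\|$ that extends the one of $\hat H$; because $X$ is divisible it is injective, so $\phi$ extends to a homomorphism $\Phi\colon G\to X$, and the pullback principle then yields the desired $|\cdot|_G$, provided $d(X,\|\cdot\|)\le\kappa$. That a divisible group may be taken as target matches the reduction of the whole paper to divisible groups, and the Baer extension of $\phi$ is precisely where the Axiom of Choice enters: such wild extensions need not exist under the Axiom of Determinacy, consistently with the failure announced in the abstract.

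The crux, and what I expect to be the main obstacle, is the construction of the divisible target $X$ of density $\kappa$. The natural candidate is the divisible hull of $\hat H$, in which every element has an integer multiple lying in $\hat H$; one attempts to define $\|\cdot\|$ by an infimum over $\mathbb{Q}$-combinations $x=\sum_i \tfrac1{n_i}h_i$ of an expression such as $\sum_i \tfrac1{n_i}\|h_i\|$, in the spirit of a Graev extension. Two points must be controlled simultaneously and are in tension. First, the formula must \emph{restrict correctly}, i.e.\ it must not collapse the norm on $\hat H$ below its given value; naive weights make $\tfrac1n(nh)$ a cheap representative of $h$ and do force such a collapse, so the weights assigned to the divided generators have to be chosen just large enough to forbid every such shortcut. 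Second, one must exhibit a dense subset of $X$ of size $\kappa$, the natural choice being the divisions $\{\tfrac1n d : d\in D,\ n\ge1\}$ of a fixed dense set $D\subseteq\hat H$ with $|D|=\kappa$, and the difficulty is to verify that dividing a norm-small element by $n$ stays norm-small, which again depends delicately on the definition of the extended norm. Once both are established, the pullback principle together with the automatic reverse inequality closes the proof; I expect the finitely-many-term combinatorics of the infimum, and the choices realising the divisible hull and the injective extension, to absorb essentially all of the remaining effort.
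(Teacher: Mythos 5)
Your reduction coincides with the paper's final step: embed $H$ into a divisible group $X$ carrying an extended pseudonorm of the same density, use Baer injectivity to extend the inclusion to a homomorphism $\Phi\colon G\to X$, and pull the pseudonorm back; the pullback principle and the automatic inequality $d(H,|\cdot|_H)\le d(G,|\cdot|_G)$ are both correct (passing to the completion $\hat H$ first is harmless but unnecessary). However, everything you have actually proved stops there. The statement you defer --- that the divisible hull of $H$ admits a pseudonorm extending $|\cdot|_H$ without collapsing it and with the same density --- is not a technical afterthought: it \emph{is} the theorem, and your sketch of it does not close. As you yourself observe, the infimum of $\sum_i\frac1{n_i}\|h_i\|$ over representations $x=\sum_i\frac1{n_i}h_i$ collapses the norm on $H$ (via $\frac1n(nh)$), and you supply neither replacement weights nor any mechanism for proving that whatever infimum you choose is bounded \emph{below} by $|x|_H$ for $x\in H$. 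That lower bound is where all the content lies, and it requires ideas absent from the proposal.

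The paper resolves exactly this obstacle in three steps. First, it adjoins one prime at a time: it suffices to treat $H\le G$ with $pG\subset H$, since a periodic divisible extension is exhausted by iterating $H_{i+1}=\frac1{p_{i+1}}H_i$ along a sequence of primes in which each prime recurs infinitely often; this already sidesteps the all-primes-at-once difficulty of your one-shot divisible hull. Second, when $pG\subset H$ the quotient $G/H$ is a vector space over $\mathbb{Z}/p\mathbb{Z}$, so one fixes a basis $\{g^\alpha+H:\alpha<\mu\}$ and sets $\rho(x,y)$ equal to the infimum of $|u-v|_H+\sum_i|pg^{\alpha_i}-pg^{\beta_i}|_H$ over representations $x=u+\sum_ig^{\alpha_i}$, $y=v+\sum_ig^{\beta_i}$: the weights are norms of elements that already lie in $H$ (differences of $p$-th multiples of basis elements), which is precisely what forbids the cheap shortcuts you worry about, while density $\le d(H,|\cdot|_H)$ is witnessed by representations drawn from a dense set in $H$ and a suitable set of basis indices. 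Third, and crucially, the proof that $\rho(x,y)\ge|x-y|_H$ for $x,y\in H$ rests on a combinatorial selector lemma (derived from Birkhoff's theorem on doubly stochastic matrices) producing a set $I\subset\omega$ with $x=u+\sum_{i\in I}pg^{\alpha_i}$ and $y=v+\sum_{i\in I}pg^{\beta_i}$ \emph{simultaneously}, after which the triangle inequality in $H$ finishes the estimate. Without something playing the role of this selector argument, your phrase ``weights chosen just large enough to forbid every such shortcut'' has no proof behind it, so the proposal has a genuine gap at its core.
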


Because of its technical character we postpone the proof of this
theorem till the end of the paper. Now we consider some its
corollaries.

According to \cite[4.1]{Tk}, $w(G)=\chi(G)\cdot ib(G)$ for any
topological group $G$ where $ib(G)$ stands for the {\em
boundedness index} of $G$, equal to the smallest cardinal $\tau$
such that for any neighborhood $U$ of the origin of $G$ there is a
subset $F\subset G$ with $G=F\cdot U$ and $|F|\le\tau$, see
\cite[\S3]{Tk}. Topological groups $G$ with $ib(G)\le\aleph_0$ are
called {\em $\omega$-bounded}, see \cite{Gu} or \cite{Tk}. It is
known that a metrizable topological group is $\omega$-bounded if
and only if it is separable. Unlike to separable groups, the class
of $\omega$-bounded groups is closed under many operations, in
particular taking subgroups and Tychonov products, see \cite{Tk}
or \cite{Gu}.

Taking into account that $|X|\le 2^{w(X)}$ for any Hausdorff
topological space $X$ \cite[1.5.1]{En} and $w(G)=\chi(G)\cdot
ib(G)$ for any topological group $G$, we get $|G|\le 2^{\chi(G)}$
for any Hausdorff $\omega$-bounded topological group $G$. This
inequality can be rewritten as $\chi(G)\ge \log |G|$, where $\log
\kappa=\min\{\tau: k\le 2^\tau\}$ for a cardinal $\kappa$.
\smallskip

\begin{theorem}~\label{exttop} Any Hausdorff group topology $\tau_H$ defined on
a subgroup $H$ of an abelian group $G$ can be extended to a
Hausdorff group topology $\tau_G$ on $G$ so that
$ib(G,\tau_G)=ib(H,\tau_H)$,
$\chi(G,\tau_G)=\max\{\chi(H,\tau_H),\log |G|\}$ and
$w(G,\tau_G)=\max\{w(H,\tau_H),\log |G|\}$.
\end{theorem}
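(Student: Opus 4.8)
The plan is to reduce the extension of topologies to the extension of pseudonorms and then invoke Theorem \ref{main}. Since every group topology is generated by its continuous pseudonorms \cite[\S2]{Tk}, I would first fix a neighbourhood base at the origin of $(H,\tau_H)$ of minimal size $\chi_0:=\chi(H,\tau_H)$ and choose, for each basic neighbourhood, a continuous pseudonorm whose ball is contained in it. This yields a family $\{p_i:i\in I\}$ of continuous pseudonorms on $H$ with $|I|\le\chi_0$ generating $\tau_H$ (finite intersections of rational balls form a base). Applying Theorem \ref{main} to each $p_i$ produces a pseudonorm $P_i$ on $G$ extending $p_i$ with $d(G,P_i)=d(H,p_i)$. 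The decisive feature is that, as $(H,p_i)$ is coarser than $(H,\tau_H)$, one has $d(H,p_i)=w(H,p_i)=\max\{\aleph_0,ib(H,p_i)\}\le\max\{\aleph_0,\kappa\}$ where $\kappa:=ib(H,\tau_H)$; hence each pseudometrizable group $(G,P_i)$ satisfies $ib(G,P_i)\le w(G,P_i)=d(G,P_i)\le\max\{\aleph_0,\kappa\}$, so its size is controlled by $\kappa$ rather than by the possibly larger cardinal $|G|$.

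The family $\{P_i\}$ restricts to $\tau_H$ on $H$ but need not separate the points of $G$, so the second step is to adjoin separating pseudonorms that \emph{vanish on $H$} and therefore leave the subspace topology undisturbed. For this I would use an algebraic embedding of $G/H$ into a power of the circle: writing $\ell:=\log|G|$, one has $|G/H|\le|G|\le 2^{\ell}$, and since $\IT^{\ell}$ is divisible with all torsion-free and $p$-ranks equal to $2^{\ell}$, the divisible hull of $G/H$ embeds into $\IT^{\ell}$. Composing the quotient map $G\to G/H$ with this embedding, the coordinate projections $\IT^{\ell}\to\IT$, and the standard norm on $\IT$ yields a family $\{q_j:j\in J\}$, $|J|\le\ell$, of pseudonorms on $G$ that vanish on $H$, separate the points of $G/H$, and make each $(G,q_j)$ separable metrizable (hence $ib(G,q_j)\le\aleph_0$).

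Now let $\tau_G$ be the group topology on $G$ generated by $\{P_i\}_{i\in I}\cup\{q_j\}_{j\in J}$. Its restriction to $H$ is generated by $\{p_i\}$ together with the identically zero pseudonorms $q_j|_H$, hence equals $\tau_H$; and it is Hausdorff, because the $P_i$ separate points whose difference lies in $H$ (since $\tau_H$ is Hausdorff and $P_i|_H=p_i$) while the $q_j$ separate points in distinct $H$-cosets. For the boundedness index I would realise $(G,\tau_G)$ as the diagonal subgroup of $\prod_i(G,P_i)\times\prod_j(G,q_j)$. As a basic neighbourhood in a product constrains only finitely many coordinates, the class of groups with $ib\le\max\{\aleph_0,\kappa\}$ is closed under arbitrary products, and it is closed under subgroups, so $ib(G,\tau_G)\le\max\{\aleph_0,\kappa\}=\kappa$; monotonicity of $ib$ under subgroups gives $ib(G,\tau_G)\ge ib(H,\tau_H)$, whence $ib(G,\tau_G)=ib(H,\tau_H)$ (the degenerate case $\kappa<\aleph_0$ being treated separately).

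It remains to compute character and weight. From above, $\chi(G,\tau_G)\le\max\{\aleph_0,|I|+|J|\}\le\max\{\chi_0,\ell\}$, while the subspace inequality gives $\chi(G,\tau_G)\ge\chi(H,\tau_H)=\chi_0$; choosing the separating pseudonorms so that the induced topology on $G/H$ realises $\IT^{\ell}$ topologically faithfully forces $\chi(G,\tau_G)\ge\chi(G/H)=\ell$, and together these yield $\chi(G,\tau_G)=\max\{\chi_0,\ell\}$. Then $w(G,\tau_G)=\chi(G,\tau_G)\cdot ib(G,\tau_G)=\max\{\chi_0,\ell\}\cdot\kappa=\max\{w(H,\tau_H),\log|G|\}$, the lower bound $w(G,\tau_G)\ge\log|G|$ coming from $|G|\le 2^{w(G,\tau_G)}$. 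The step I expect to be the main obstacle is keeping all three invariants under simultaneous control: an arbitrary extension of a single $p_i$ could render $G/H$ discrete in the corresponding pseudometric and thereby inflate the weight up to $|G|$, so it is precisely the density-preserving extension furnished by Theorem \ref{main} that caps the weight and the boundedness index at $\max\{w(H,\tau_H),\log|G|\}$. The complementary difficulty, achieving Hausdorffness without enlarging the topology that $\tau_G$ induces on $H$, is resolved by the requirement that the separating pseudonorms $q_j$ vanish on $H$.
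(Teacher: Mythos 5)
Your proposal is correct and follows essentially the same route as the paper: extend a generating family of continuous pseudonorms on $H$ via Theorem~\ref{main}, separate the cosets of $H$ by embedding (the divisible hull of) $G/H$ into a power $\IT^{\log|G|}$ of the circle, take the group topology generated by both families, and bound $ib$, $\chi$ and $w$ by realising $(G,\tau_G)$ as a subgroup of the corresponding product. The paper phrases the second ingredient as the initial topology with respect to the quotient homomorphism onto $G/H\subset\IT^{\kappa}$ rather than as pseudonorms vanishing on $H$, but this is the same construction.
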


In an obvious way Theorem~\ref{exttop} implies

\begin{corollary}\label{extsep} Any separable metrizable topology defined on a
subgroup $H$ of an abelian group $G$ with $|G|\le\mathfrak c$ can
be extended to a separable metrizable topology on $G$.
\end{corollary}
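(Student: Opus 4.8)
The plan is to read the corollary off from Theorem~\ref{exttop} by translating ``separable metrizable'' into the language of the cardinal invariants $\chi$ and $w$, and then exploiting the hypothesis $|G|\le\mathfrak c$ through a single step of cardinal arithmetic.

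First I would record what the separable metrizable topology $\tau_H$ supplies. Being (pseudo)metrizable, the group $(H,\tau_H)$ is first countable, so $\chi(H,\tau_H)\le\aleph_0$; being separable and metrizable, it satisfies $d(H,\tau_H)=w(H,\tau_H)=\aleph_0$ by the equality $d=w$ for metrizable spaces noted in the introduction; and $\tau_H$ is Hausdorff.

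Next I would apply Theorem~\ref{exttop} to obtain a Hausdorff group topology $\tau_G$ on $G$ extending $\tau_H$ with $\chi(G,\tau_G)=\max\{\chi(H,\tau_H),\log|G|\}$ and $w(G,\tau_G)=\max\{w(H,\tau_H),\log|G|\}$. The decisive observation is the arithmetic behind the hypothesis: since $|G|\le\mathfrak c=2^{\aleph_0}$, the definition of $\log$ gives $\log|G|\le\aleph_0$. Combining this with the bounds from the previous step yields $\chi(G,\tau_G)\le\aleph_0$ and $w(G,\tau_G)\le\aleph_0$.

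Finally I would conclude. Since $(G,\tau_G)$ is Hausdorff and first countable, the Birkhoff--Kakutani metrization theorem makes it metrizable; and $w(G,\tau_G)\le\aleph_0$ says it is second countable, hence separable. Thus $\tau_G$ is a separable metrizable topology on $G$ extending $\tau_H$, as required. I do not expect a genuine obstacle here, since all the difficulty is absorbed into Theorem~\ref{exttop}; the only point requiring care is the single arithmetic step $\log\mathfrak c=\aleph_0$, which is precisely where the cardinality restriction $|G|\le\mathfrak c$ enters and which is exactly what keeps the weight of the extension countable.
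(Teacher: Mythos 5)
Your proposal is correct and is precisely the derivation the paper has in mind: the paper offers no separate proof, saying only that Theorem~\ref{exttop} implies the corollary ``in an obvious way,'' and your argument --- reading off $\chi(H,\tau_H)\le\aleph_0$ and $w(H,\tau_H)=\aleph_0$, noting $\log|G|\le\aleph_0$ from $|G|\le\mathfrak c$, and invoking Birkhoff--Kakutani --- is exactly that obvious way made explicit.
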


Here $\mathfrak c$ stands for the size of continuum. The next our
corollary follows from Theorem~\ref{exttop} and Theorem  24.1 of
\cite{F} asserting that each abelian group $H$ (with no torsion)
is a subgroup of a divisible group $G$ (with no torsion) such that
$|G|=|H|$.

\begin{corollary} Any Hausdorff topological abelian group $H$
(with no torsion) is a subgroup of a Hausdorff abelian divisible
group $G$ (with no torsion) such that $w(G)=w(H)$,
$\chi(G)=\chi(H)$ and $ib(G)=ib(H)$.
\end{corollary}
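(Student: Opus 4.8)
The plan is to reduce the Corollary to the topological Theorem~\ref{exttop} by first building a purely algebraic divisible envelope of $H$. First I would invoke Theorem~24.1 of \cite{F}: the abelian group $H$ (torsion free, in the torsion-free case) embeds as a subgroup of a divisible abelian group $G$ (again torsion free in that case) with $|G|=|H|$. At this point $G$ carries no topology, and the given Hausdorff group topology $\tau_H$ lives only on the subgroup $H$ of $G$.

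Next I would apply Theorem~\ref{exttop} to the subgroup $H$ of $G$, extending $\tau_H$ to a Hausdorff group topology $\tau_G$ on $G$ with $ib(G,\tau_G)=ib(H,\tau_H)$, $\chi(G,\tau_G)=\max\{\chi(H,\tau_H),\log|G|\}$ and $w(G,\tau_G)=\max\{w(H,\tau_H),\log|G|\}$. The boundedness index is then already preserved, and it remains only to collapse the two maxima, for which the equality $|G|=|H|$ is the decisive input. It gives $\log|G|=\log|H|$, and since $(H,\tau_H)$ is Hausdorff the bound $|H|\le 2^{w(H)}$ from \cite[1.5.1]{En} yields $\log|H|\le w(H)$. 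Hence $\max\{w(H),\log|G|\}=w(H)$, so the weight is preserved: $w(G)=w(H)$.

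The character equation is where I expect the real difficulty. To conclude $\chi(G)=\chi(H)$ I need $\log|G|=\log|H|\le\chi(H)$, and this is the main obstacle: in contrast to the weight, the inequality $|H|\le 2^{\chi(H)}$ is not available for an arbitrary Hausdorff group (a discrete group of size exceeding $\mathfrak c$ already fails it). The clean regime is $ib(H)\le\chi(H)$, where $w(H)=\chi(H)\cdot ib(H)=\chi(H)$ and the weight estimate already delivers $\log|H|\le w(H)=\chi(H)$; this covers in particular every $\omega$-bounded $H$, which is exactly why the inequality $\chi(G)\ge\log|G|$ for Hausdorff $\omega$-bounded groups was recorded just before Theorem~\ref{exttop}. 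In the complementary regime $ib(H)>\chi(H)$ one has $w(H)=ib(H)=ib(G)$, so the weight already absorbs $\log|G|$; the point I would then have to verify carefully is that the extension of Theorem~\ref{exttop} can be chosen so as not to inflate the character past $\chi(H)$, which is the genuinely delicate step and the heart of this deduction.
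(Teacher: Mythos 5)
Your reduction is exactly the paper's: the paper's entire proof of this corollary is the remark that it ``follows from Theorem~\ref{exttop} and Theorem 24.1 of \cite{F}'', i.e., embed $H$ algebraically into a divisible group $G$ with $|G|=|H|$ (torsion-free when $H$ is) and then extend the topology by Theorem~\ref{exttop}. Your cardinal arithmetic for the boundedness index and the weight --- $ib(G)=ib(H)$ directly from Theorem~\ref{exttop}, and $w(G)=\max\{w(H),\log|G|\}=w(H)$ because $|G|=|H|\le 2^{w(H)}$ for any Hausdorff space --- is the intended argument and is correct.

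The difficulty you isolate for the character is genuine, and you should be aware that the paper does not resolve it either. Theorem~\ref{exttop} yields $\chi(G)=\max\{\chi(H),\log|G|\}$, which collapses to $\chi(H)$ precisely when $\log|H|\le\chi(H)$. That inequality is automatic for $\omega$-bounded $H$ (this is the inequality $\chi\ge\log|\cdot|$ recorded just before Theorem~\ref{exttop}), hence it covers every application the paper actually makes (the separable metrizable corollaries), and more generally it holds whenever $ib(H)\le\chi(H)$, exactly as you observe. But for a Hausdorff group with $\chi(H)<\log|H|$ --- already a discrete group of cardinality greater than $\mathfrak c$ --- the deduction as written gives only $\chi(H)\le\chi(G)\le\log|G|$, and the claimed equality $\chi(G)=\chi(H)$ does not follow from Theorem~\ref{exttop} as stated. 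Closing this case requires an argument not present in the paper: for discrete $H$ one can simply declare $H$ open in $G$ and use $|G/H|\le|H|=ib(H)$, and in general one would have to rerun the proof of Theorem~\ref{exttop} with a Hausdorff group topology on $G/H$ of character at most $\chi(H)$ and boundedness index at most $ib(H)$, rather than the embedding of $G/H$ into $\IT^{\log|G|}$ used there. So your proposal reproduces the paper's proof faithfully, including its one soft spot; since you leave that step open, the proposal is incomplete as a proof of the literal statement, but the missing step is one the paper glosses over rather than something you failed to extract from it.
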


The following particular case of the above corollary gives a
positive answer to the question stated at the beginning of the
paper.

\begin{corollary}\label{separ} Each separable metrizable abelian group $H$ (with no
torsion) is a subgroup of a separable metrizable divisible group
$G$ (having no torsion).
\end{corollary}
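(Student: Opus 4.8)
The plan is to read off Corollary~\ref{separ} from the corollary immediately preceding it, which supplies a divisible extension whose cardinal invariants match those of the given group; the only real task is to translate between these invariants and the words \emph{separable} and \emph{metrizable}.

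First I would record what the hypothesis gives. Let $H$ be a separable metrizable abelian group (with no torsion). Being separable means $d(H)\le\aleph_0$, and being (pseudo)metrizable gives $w(H)=d(H)$ by the fact recalled just before Theorem~\ref{main}; hence $w(H)\le\aleph_0$. Since $H$ is metrizable it is in particular Hausdorff, so the preceding corollary applies.

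Next I would apply that corollary to $H$. It yields a Hausdorff abelian divisible group $G$ (with no torsion, if $H$ has none) containing $H$ as a topological subgroup and satisfying $w(G)=w(H)\le\aleph_0$. Thus $G$ has countable weight, i.e.\ is second countable. A second countable space is separable, so $d(G)\le\aleph_0$; and a regular second countable space is metrizable by Urysohn's metrization theorem, while every Hausdorff topological group is completely regular. Therefore $G$ is a separable metrizable divisible group (having no torsion) containing $H$, as required.

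There is essentially no obstacle at this stage: all the difficulty has already been absorbed into Theorem~\ref{main} and the corollaries derived from it. The one point worth flagging is that it suffices to control the weight alone---one does not need the separate clauses on $\chi$ and $ib$---since for a Hausdorff topological group countable weight forces separability and metrizability simultaneously (and indeed $w(H)\le\aleph_0$ already entails $\chi(H)\le\aleph_0$ and $ib(H)\le\aleph_0$ via $w=\chi\cdot ib$).
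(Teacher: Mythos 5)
Your proposal is correct and follows the same route the paper intends: the paper presents Corollary~\ref{separ} as an immediate particular case of the preceding corollary, and your argument simply makes explicit the translation (separable metrizable $\Leftrightarrow$ countable weight for topological groups, via $d=w$ for metrizable spaces and Birkhoff--Kakutani or Urysohn in the other direction). Nothing is missing.
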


In fact, the construction of such a divisible group $G\supset H$
hardly uses Axiom of Choice (see Remark~\ref{rem}). As a result
the group $G$ has a complex descriptive structure. We shall show
that in general the group $G$ is not analytic. Let us recall that
a topological space is {\em analytic} if it is a metrizable
continuous image of a Polish space. As usual, under a {\em Polish
space} we understand a topological space homeomorphic to a
separable complete metric space. A topological group is {\em
Polish} ({\em analytic}) if its underlying topological space is
Polish (analytic).

The well-known Open Mapping Principle for Banach spaces
generalizes to topological groups as follows: {\em Any continuous
group homomorphism from an analytic group onto a Polish group is
open}. The proof of this Open Mapping Principle follows from
Theorem 9.10 \cite{Ke} asserting that any homomorphism $h:G\to H$
from a Polish group $G$ into a $\omega$-bounded group $H$ is
continuous provided $h$ has the Baire Property and Theorem 29.5 of
\cite{Ke} asserting that analytic subspaces of Polish spaces have
Baire Property. We remind that a subset $A$ of a topological space
$X$ has {\em the Baire property} if $A$ contains a
$G_\delta$-subset $G$ of $X$ such that $A\setminus G$ is meager in
$X$.

For a group $H$ with no torsion and a positive integer $n$ let
$nH=\{ny:y\in H\}\subset H$ and $\frac1n:nH\to H$ be the map
assigning to each element $x\in nH$ a unique $y\in H$ such that
$ny=x$.

\begin{proposition}\label{anal} If a Polish group $H$ is a subgroup of a
divisible analytic group $G$ with no torsion, then for every
positive integer $n$ the map $\frac1n:nH\to H$ is continuous.
\end{proposition}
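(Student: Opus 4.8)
The plan is to realize $\frac1n:nH\to H$ as a restriction of the inverse of a continuous open isomorphism produced by the Open Mapping Principle, applied to a single carefully chosen analytic group. First I would record the descriptive setup. Since $G$ is analytic it is separable metrizable, and $H$ carries the subspace topology inherited from $G$, which by hypothesis is Polish. Thus $H$ is a completely metrizable subspace of the metrizable space $G$, and therefore a $G_\delta$-set in $G$. Because $G$ is analytic, every open subset of $G$ is analytic, and analytic subsets of $G$ are closed under countable intersections; hence the $G_\delta$-set $H$ is an analytic subspace of $G$.

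Next I would introduce the multiplication-by-$n$ endomorphism $\mu:G\to G$, $\mu(x)=nx$, which is a continuous homomorphism, and consider the subgroup $G_n=\mu^{-1}(H)=\{x\in G:nx\in H\}$. As the preimage of the analytic set $H$ under the continuous map $\mu$, the set $G_n$ is analytic; being a subgroup of the topological group $G$, it is an analytic topological group in the induced topology, and clearly $H\subseteq G_n$.

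The heart of the argument is that $\mu$ restricts to a continuous isomorphism $\mu|_{G_n}:G_n\to H$. It is onto because $G$ is divisible: for $y\in H$ the equation $nx=y$ has a solution $x\in G$, and then $nx=y\in H$ forces $x\in G_n$. It is injective because $G$ has no torsion, so $\ker\mu=\{0\}$. Thus $\mu|_{G_n}$ is a continuous bijective homomorphism from the analytic group $G_n$ onto the Polish group $H$, and the Open Mapping Principle stated above makes it open, hence a topological isomorphism. Its inverse $\nu=(\mu|_{G_n})^{-1}:H\to G_n\subseteq G$ is therefore continuous and sends each $y\in H$ to the unique $x\in G$ with $nx=y$.

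Finally I would identify $\frac1n$ with a restriction of $\nu$. If $x\in nH$, say $x=ny$ with $y\in H$, then $\nu(x)$ is the unique element of $G$ that $\mu$ carries to $x$; by the absence of torsion this element is exactly $y=\frac1n(x)$, so $\nu(x)\in H$ and $\frac1n=\nu|_{nH}$. Being the restriction to $nH$ of the continuous map $\nu$, the map $\frac1n:nH\to H$ is continuous. I expect the main obstacle to be the descriptive bookkeeping of the first two paragraphs, namely verifying that $H$, and consequently $G_n$, is genuinely analytic as a subspace of $G$, since this is precisely what licenses the single decisive application of the Open Mapping Principle; once $G_n$ is in hand, divisibility and torsion-freeness make the rest essentially formal.
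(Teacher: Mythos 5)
Your proof is correct and follows essentially the same route as the paper: both identify the analytic subgroup $\frac1nH=\{x\in G:nx\in H\}$, apply the Open Mapping Principle to the continuous bijective homomorphism $x\mapsto nx$ from it onto the Polish group $H$, and restrict the continuous inverse to $nH$. The only (harmless) difference is in the bookkeeping showing $\frac1nH$ is analytic: you go via ``$H$ is $G_\delta$ in $G$,'' while the paper uses the standard fact that a completely metrizable subgroup is closed in $G$.
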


\begin{proof} The subgroup $H$, being complete, is closed in $G$.
Then the subgroup $\frac1nH=\{g\in G:ng\in H\}$, being the
preimage of $H$ under the continuous map $n:G\to G$, $n:x\mapsto
nx$, is a closed subset of $G$ and thus is analytic. Since the
group $G$ is divisible and has no torsion, the map $n:\frac1nH\to
H$, $n:x\to nx$, is a bijective continuous group homomorphism from
the analytic group  $\frac1nH$ onto the Polish group $H$. Applying
the Open Mapping Principle for topological groups we conclude that
this map is a topological isomorphism and hence the map
$\frac1n:H\to\frac1nH$ is continuous. Since $nH\subset H$, the map
$\frac1n:nH\to H$ is continuous too.
\end{proof}

Finally we give an example of a Polish group without torsion
admitting no embedding into a divisible analytic group without
torsion.

\begin{example}\label{ex} There is a Polish group $H$ without torsion such
that the map $\frac12:2H\to H$ is discontinuous. This group $H$
cannot be a subgroup of a divisible analytic group with no
torsion.
\end{example}

\begin{proof} For every $k\in\IN$ let $H_k$ be a copy of the group
$\IR$ of reals and let $e_k=1\in H_k$. Endow the group $H_k$ with
the norm $|x|_k=\sqrt{(\cos(\pi z)-1)^2+\sin^{2}(\pi
z)+(2^{-(k+1)}x)^2}$ (which is generated by the usual Euclidean
distance under a suitable winding of $H_k=\IR$ around a cylinder
in $\IR^3$). It is easy to verify that $|e_k|_k>2$ while
$|2e_k|_k=2^{-k}$.

On the direct sum $\oplus_{k\in\IN}H_k$ consider the norm
$|(x_k)_{k\in\IN}|= \sum_{i\in\IN} |x_k|_k$ and let $H$ be the
completion of $\oplus_{k\in\IN}H_k$ with respect to this norm.
Then $H$ is a Polish group. We claim that $H$ has no torsion.

Consider the identity inclusion $i:\oplus_{k\in\IN}H_k\to
\prod_{k\in\IN}H_k$ from the direct sum into the direct product
endowed with the Tychonov topology. Observe that this direct
product is a complete group. To show that the group $H$ has no
torsion, it suffices to verify that the extension $\bar i:H\to
\prod_{k\in\IN}H_k$ of the homomorphism $i$ onto the completion
$H$ is injective.

It will be convenient to think of elements of the groups
$\oplus_{k\in\IN}H_k$ and $\prod_{k\in\IN}H_k$ as functions
$f:\IN\to\bigcup_{k\in\IN}H_k$.

Assuming that the homomorphism $\bar i$ is not injective, we could
find an element $f_\infty\in H$ such that $f_\infty\ne 0$ but
$\bar i(f_\infty)=0$. Fix any $\e>0$ with $\e<|f_\infty|$. Choose
a sequence $(f_n)_{n\in\IN}\in \oplus_{k\in\IN}H_k$ converging to
$f_\infty$ in $H$. We can assume that $|f_n|>\e$ for every
$n\in\IN$. By the continuity of the map $\bar i$, we conclude that
the sequence $\{i(f_n)\}_{n\in\IN}$ converges to zero in
$\prod_{k\in\IN}H_k$ (this means that the function sequence
$(f_n)$ is pointwise convergent to zero). Since the sequence
$(f_n)$ is Cauchy in $H$, there is $m\in\IN$ such that
$|f_m-f_j|<\frac{\e}2$ for any $j\ge m$. Without loss of
generality, we can assume that $f_m(k)=0$ for all $k> m$. Since
for every $k$ \ $\lim_{j\to\infty}f_j(k)=0$, we can find $j>m$ so
large that $|f_j(k)|_k<\frac{\e}{2m}$ for all $k\le m$. Then
$|f_m-f_j|=\sum_{k=1}^\infty |f_m(k)-f_j(k)|_k\ge
\sum_{k=1}^m|f_m(k)-f_j(k)|_k\ge
\sum_{k=1}^m|f_m(k)|_k-\sum_{k=1}^m|f_j(k)|_k\ge
|f_m|-\sum_{k=1}^m\frac{\e}{2m}> \e-\frac{\e}{2}=\frac{\e}2,$
which contradicts to $|f_m-f_j|<\frac{\e}2$.

Therefore, the homomorphism $\bar i$ is injective and the group
$H$ has no torsion. Since $|e_{k}|=|e_k|_k>2$ and
$|2e_{k}|=|2e_k|_k=2^{-k}$ for all $k$, we see that the sequence
$(2e_k)$ converges to zero in $H$ while $(e_k)$ does not. This
means that the map $\frac12:2H\to H$ is discontinuous.
\end{proof}.

\begin{remark}\label{rem}\rm Corollary~\ref{separ} can not be proven
without the full Axiom of Choice and is not true under the Axiom
of Determinacy. This axiom contradicts the Axiom of Choice but
implies its weaker form, the Countable Axiom of Choice, see
\cite[\S9.2 and \S9.3]{JW}. It is known that under the Axiom of
Determinacy, any subset of a Polish space has the Baire Property,
see \cite[8.35]{Ke}. This fact and Theorem 9.10 of \cite{Ke}
implies that under Axiom of Determinacy the Open Mapping Principle
for topological groups holds in the following more strong form:
any continuous homomorphism $h:H\to G$ from a $\omega$-bounded
group $H$ onto a Polish group $G$ is open. Using this stronger
form of the Open Mapping Principle and repeating the proof of
Proposition~\ref{anal} we see that under the Axiom of Determinacy
this proposition holds without the analycity assumption on the
group $G$. Thus we come to a rather unexpected conclusion: {\em
Under the Axiom of Determinacy the group $H$ from Example~\ref{ex}
cannot be embedded into a metrizable separable divisible group
with no torsion, in spite of the fact that algebraically, $H$ is a
subgroup of the countable product $\IR^\omega$ of lines}. This
shows that Corollary~\ref{separ} is not true under the Axiom of
Determinacy.
\end{remark}

\section{Proof of Theorem~\ref{main}}

In the proof of Theorem~\ref{main} we shall need one combinatorial
lemma. A collection $\mathcal A$ of subsets of a set $X$ is called
{\em $k$-uniform} where $k\in\IN$ if $|A|=k$ for each
$A\in{\mathcal A}$; ${\mathcal A}$ is {\em disjoint} if it
consists of pairwise disjoint sets.

\begin{lemma}\label{combi} Suppose $k\in\IN$ and $\mathcal A$, $\mathcal B$ are two
disjoint $k$-uniform finite collections of subsets of an infinite
set $X$. Then there is a subset $I\subset X$ such that $|I\cap
C|=1$ for each $C\in\mathcal A\cup\mathcal B$.
\end{lemma}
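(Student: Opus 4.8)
The plan is to translate the problem into a bipartite matching problem between the two families. Form the bipartite graph $G$ whose two parts are $\mathcal A$ and $\mathcal B$ and in which, for every element $x$ lying simultaneously in some member of $\mathcal A$ and some member of $\mathcal B$, one joins the unique $A\in\mathcal A$ with $x\in A$ to the unique $B\in\mathcal B$ with $x\in B$ (these are unique since each family is disjoint). Call a set $C\in\mathcal A\cup\mathcal B$ \emph{full} if $C\subseteq\bigcup\mathcal B$ (when $C\in\mathcal A$) or $C\subseteq\bigcup\mathcal A$ (when $C\in\mathcal B$), i.e.\ if all its elements are shared with the other family; every non-full set then has a \emph{private} element belonging to no member of the other family. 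I claim it suffices to produce a matching $M$ in $G$ saturating every full set. Indeed, given such an $M$, build $I$ by putting into it the shared element realizing each edge of $M$, and, for every set not saturated by $M$ (necessarily non-full), one of its private elements. A routine check, using that members of $\mathcal A$ are pairwise disjoint, that members of $\mathcal B$ are pairwise disjoint, and that a private element of a $\mathcal B$-set avoids $\bigcup\mathcal A$ (and vice versa), shows that each $C\in\mathcal A\cup\mathcal B$ meets $I$ in exactly one point.

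Next I would verify, via Hall's marriage theorem, that the full members of $\mathcal A$ can be saturated by a matching, and symmetrically for $\mathcal B$. For a subfamily $T$ of full members of $\mathcal A$, its union has exactly $k|T|$ elements, since the sets in $T$ are pairwise disjoint of size $k$; as each such element is shared it lies in some member of the neighborhood $N(T)\subseteq\mathcal B$, and each member of $N(T)$, having only $k$ elements, can contain at most $k$ of them. Hence $k|T|\le k\,|N(T)|$, so $|T|\le|N(T)|$ and Hall's condition holds. Thus there is a matching $M_{\mathcal A}$ saturating all full members of $\mathcal A$, and likewise a matching $M_{\mathcal B}$ saturating all full members of $\mathcal B$.

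The remaining, and main, difficulty is that $M_{\mathcal A}$ and $M_{\mathcal B}$ saturate full sets on opposite parts, whereas the construction above needs a single matching saturating the full sets on both parts at once. This is exactly what the Mendelsohn--Dulmage theorem delivers: from matchings saturating a prescribed subset of each part one obtains a single matching saturating the union. Concretely, I would analyze the symmetric difference $M_{\mathcal A}\triangle M_{\mathcal B}$, which decomposes into alternating paths and even cycles, and on each component choose the edges that keep the relevant full vertices at its ends saturated; disjointness guarantees no vertex is claimed twice. The resulting matching $M$ saturates all full sets, and the first paragraph then produces the desired $I$. (The infinitude of $X$ is not really used here; the finiteness of $\mathcal A\cup\mathcal B$ together with $k$-uniformity is what drives the Hall estimate.)
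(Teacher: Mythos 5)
Your proof is correct, but it takes a genuinely different route from the paper's. The paper first pads the two families: it extends $\mathcal A$ and $\mathcal B$ to $k$-uniform disjoint collections $\mathcal C\supset\mathcal A$ and $\mathcal D\supset\mathcal B$ with $|\mathcal C|=|\mathcal D|$ and $\bigcup\mathcal C=\bigcup\mathcal D$ (this padding is where the infinitude of $X$ gets used, to make the leftover parts divisible by $k$), so that both families become partitions of one common finite set. It then forms the doubly stochastic matrix $a_{ij}=\frac1k|C_i\cap D_j|$ and invokes the Birkhoff theorem on doubly stochastic matrices to extract a permutation $\sigma$ with $a_{i\sigma(i)}>0$, i.e.\ a perfect matching in the intersection graph of the two partitions; choosing one point in each $C_i\cap D_{\sigma(i)}$ gives $I$ in a single stroke, with no case distinction. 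You avoid the padding entirely by splitting each set into its ``shared'' and ``private'' parts, which forces you to treat full and non-full sets separately and to merge two one-sided matchings via Mendelsohn--Dulmage; in exchange you get an argument that never leaves the original families, uses only Hall's theorem (the deficiency/restricted form) plus the standard symmetric-difference argument, and, as you note, does not need $X$ to be infinite. The two proofs are close cousins at bottom --- the positive diagonal of a doubly stochastic matrix is itself a Hall/K\"onig--Frobenius statement, and Birkhoff is strictly stronger than what the paper needs --- but the paper's normalization trick buys a shorter, uniform conclusion, while your version is more elementary in its toolkit and slightly more general in its hypotheses. One small presentational point: your graph should be taken with an edge $AB$ whenever $A\cap B\neq\emptyset$ (with a chosen witness per edge) rather than one edge per shared element, but this does not affect any step of the argument.
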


\begin{proof} It is easy to construct $k$-uniform disjoint finite
collections ${\mathcal C}$, ${\mathcal D}$ of subsets of $X$ such
that ${\mathcal A}\subset {\mathcal C}$, ${\mathcal
B}\subset{\mathcal D}$, $|{\mathcal C}|=|{\mathcal D}|$, and
$\cup{\mathcal C}=\cup{\mathcal D}$. Let $n=|{\mathcal
C}|=|{\mathcal D}|$ and write ${\mathcal C}=\{C_1,\dots,C_n\}$,
${\mathcal D}=\{D_1,\dots,D_n\}$. Consider the matrix
$[a_{ij}]_{i,j=1}^n$ where $a_{ij}=\frac1k|C_i\cap D_j|$ and
observe that it is {\em double stochastic}, that is
$\sum_{i=1}^na_{ij}=1=\sum_{j=1}^na_{ij}$ for all
$i,j\in\{1,\dots,n\}$. According to the Birkhoff Theorem (see
\cite{Bi}, \cite[p.556]{Ga}, or \cite[8.40]{A}) each double
stochastic matrix is a convex combination of {\em permutating
matrices}, that is matrices of the form
$[\delta_{i,\sigma(j)}]_{i,j=1}^n$ where $\sigma$ is a permutation
of the set $\{1,\dots,n\}$ and $[\delta_{ij}]_{i,j=1}^n$ is the
identity matrix. This result implies the existence of a
permutation $\sigma$ of the set $\{1,\dots,n\}$ such that
$a_{i,\sigma(i)}>0$ for all $i$. This means that the intersection
$C_i\cap D_{\sigma(i)}$ is not empty and thus contains some point
$x_i$. Let $I=\{x_1,\dots,x_n\}$ and observe that $|C\cap I|=1$
for any element $C\in{\mathcal C}\cup{\mathcal D}\supset{\mathcal
A}\cup{\mathcal B}$.
\end{proof}

Theorem~\ref{main} will be proved by induction whose inductive
step is based of the following

\begin{lemma} Let $H$ be a subgroup of a group $G$ such that
$pG\subset H$ for some prime number $p$. Then any pseudonorm
$|\cdot|_H$ on $H$ can be extended to a pseudonorm $|\cdot|_G$ on
$G$ so that $d(G,|\cdot|_G)=d(H,|\cdot|_H)$.
\end{lemma}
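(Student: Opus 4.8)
The plan is to exploit that the hypothesis $pG\subseteq H$ makes the quotient $G/H$ a vector space over the field $\IZ/p\IZ$: every element of $G/H$ is killed by $p$. Using the Axiom of Choice, I would fix representatives $\{g_i\}_{i\in I}\subseteq G$ whose cosets form a basis of $G/H$, so that each $g\in G$ has a unique expression $g=h+\sum_{i\in S}n_ig_i$ with $h\in H$, $S\subseteq I$ finite, and $n_i\in\{1,\dots,p-1\}$. The only relations available are $pg_i\in H$. A one-line computation (inserting $p$ copies of $g_i$ into a representation of an element of $H$) shows that \emph{any} pseudonorm on $G$ extending $|\cdot|_H$ must satisfy $|g_i|_G\ge\frac1p|pg_i|_H$; moreover, replacing $g_i$ by $g_i+h$ changes $pg_i$ only inside its coset modulo $pH$, so I would first choose the representatives so that $|pg_i|_H$ is (nearly) minimal in $pg_i+pH$, i.e.\ $\frac1p|pg_i|_H\approx\frac1p\,\mathrm{dist}_H(pg_i,pH)$. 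The extension $|\cdot|_G$ is then defined by an infimum-over-representations formula that charges $|h|_H$ to each $H$-summand and the borderline value $\frac1p|pg_i|_H$ to each occurrence of $\pm g_i$; such an infimum is automatically a symmetric pseudonorm with $|\cdot|_G\le|\cdot|_H$ on $H$.

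The density bookkeeping splits into the two inequalities $d(G,|\cdot|_G)\ge d(H,|\cdot|_H)$ and $\le$. For $\ge$, recall that for pseudometrizable groups $d=w=ib$ and that $ib$ is monotone under passing to subgroups (the class of $\omega$-bounded groups is subgroup-closed), whence $d(H)=ib(H)\le ib(G)=d(G)$. For $\le$, I would use the quotient homomorphism $q\colon G\to G/H$: each fibre of $q$ is a coset, isometric to $(H,|\cdot|_H)$ and hence coverable by $\le d(H)$ balls of each radius, so it suffices to show $d(G/H,\|\cdot\|)\le d(H)$ for the induced quotient pseudonorm $\|\bar g\|=\inf_{h\in H}|g+h|_G$. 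The governing fact is that $\bar g\mapsto pg+pH$ is a genuine \emph{homomorphism} $\mu\colon G/H\to H/pH$, that $\|\bar g\|\ge\frac1p\|\mu(\bar g)\|_{H/pH}$ always holds, and that $d(H/pH)\le d(H)$ since $H\to H/pH$ is a $1$-Lipschitz surjection. Thus the entire burden is to arrange that the quotient pseudonorm is \emph{comparable to} the homomorphic pullback $\frac1p\|\mu(\cdot)\|_{H/pH}$, for then $\mu$ becomes a scaled isometry onto its image and $d(G/H)\le d(H/pH)\le d(H)$, giving $d(G)\le d(H)\cdot d(H)=d(H)$.

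Here lies the \textbf{main obstacle}, and this is where Lemma~\ref{combi} enters. The naive reading of the infimum formula makes $\|\cdot\|$ a weighted $\ell^1$-norm on $\bigoplus_{i}\IZ/p$, with weights $\frac1p|pg_i|_H$; its density can be as large as $|I|=\dim_{\IZ/p}(G/H)\gg d(H)$, because distinct coordinates stay separated. One therefore needs a genuine \emph{amortization}: many copies of a representation must be rearranged so that the separate $H$-parts $n_ipg_i$ coalesce into the single element $\sum_in_ipg_i=p r_{\bar g}$ and can be pushed toward the $pH$-optimal point realizing $\mathrm{dist}_H(pr_{\bar g},pH)$. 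The delicate point is that one may \emph{not} simply declare every $c\in G\setminus H$ a cheap generator of length $\frac1p|pc|_H$ — for a concave norm such as $|n|_H=\sqrt{|n|}$ this already destroys the extension property, since e.g.\ $|pc_1|_H+|pc_2|_H$ can drop below $p|c_1+c_2|_H$. Lemma~\ref{combi} is precisely the device that resolves this tension: regarding the positive and negative coordinate-occurrences of two competing representations as disjoint $k$-uniform families $\mathcal A,\mathcal B$, the Birkhoff/König common transversal it provides lets one decompose the occurrences into $p$ balanced blocks, each summing to the same element of $H$. Applying the triangle inequality blockwise then simultaneously (a) certifies that no representation of an $h\in H$ undershoots $|h|_H$, so $|\cdot|_G$ really extends $|\cdot|_H$, and (b) certifies that the amortized infimum attains the small quotient value $\frac1p\|\mu(\bar g)\|_{H/pH}$.

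Assembling the pieces, $|\cdot|_G$ is a pseudonorm on $G$ restricting to $|\cdot|_H$ on $H$, its quotient pseudonorm is (up to the factor $\frac1p$) the pullback of the $H/pH$-norm under the homomorphism $\mu$, and the fibre/quotient estimate together with $ib$-monotonicity yields $d(G,|\cdot|_G)=d(H,|\cdot|_H)$, as required. I expect the combinatorial decomposition of step three — turning the $p$-fold balancing of occurrences guaranteed by Lemma~\ref{combi} into the two sharp inequalities above — to be the technical heart of the argument, with the use of the Axiom of Choice confined to the initial selection of the basis $\{g_i\}$ of $G/H$.
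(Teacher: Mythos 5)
Your skeleton matches the paper's: view $G/H$ as a vector space over $\mathbb{Z}/p\mathbb{Z}$, fix basis representatives, define the extension by an infimum over representations, and you correctly isolate the central difficulty — a formula that charges each occurrence of $g_i$ the fixed weight $\tfrac1p|pg_i|_H$ keeps distinct generators uniformly separated, so $d(G,|\cdot|_G)$ can be as large as $\dim_{\mathbb{Z}/p\mathbb{Z}}(G/H)$. But the proposal stops exactly where the proof has to begin: no formula is ever written down that escapes this trap, and the two mechanisms you gesture at pull in opposite directions. If you keep per‑occurrence costs, the density bound fails, as you observe. If you ``coalesce'' the costs into $\tfrac1p\,\mathrm{dist}_H(\sum_i n_i pg_i,pH)$, the extension property breaks: allowing the coefficient $n_i=p$ gives $x=(x-pg_i)+p g_i$ the cost $|x-pg_i|_H+0$, so the subgroup generated by the elements $pg_i$ is collapsed to the origin and $|\cdot|_G$ need not restrict to $|\cdot|_H$; forbidding such coefficients puts you back in the per‑occurrence regime. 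No balancing of occurrences repairs this, because the defect is in the cost function, not in the combinatorics, and Lemma~\ref{combi} is not the tool for it.

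The idea you are missing is that the paper does not assign an absolute weight to a generator at all. It defines an invariant \emph{pseudometric} directly: a representation $x=u+\sum_{i\in\omega}g^{\alpha_i}$ is paired index by index with a representation $y=v+\sum_{i\in\omega}g^{\beta_i}$, and the cost charged is $|u-v|_H+\sum_i|pg^{\alpha_i}-pg^{\beta_i}|_H$, i.e.\ the $H$-distance between the $p$-th multiples of the two \emph{matched} generators. This makes $\rho(g^\alpha,g^\beta)\le|pg^\alpha-pg^\beta|_H$, so the map $pg^\alpha\mapsto g^\alpha$ is $1$-Lipschitz from the subspace $\{pg^\alpha:\alpha<\mu\}$ of $(H,|\cdot|_H)$ into $(G,\rho)$, and a dense set of size $d(H)$ in that subspace, together with a dense set in $H$, immediately produces a dense set of size $d(H)$ in $(G,\rho)$ — no amortization, no quotient/fibre analysis, and no appeal to $H/pH$. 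Lemma~\ref{combi} then serves only the half you mention in passing: for $x,y\in H$ it extracts a set $I$ of indices meeting each fibre $\{i:\alpha_i=\alpha\}$ and $\{i:\beta_i=\alpha\}$ in exactly a $\tfrac1p$ proportion, whence $x=u+\sum_{i\in I}pg^{\alpha_i}$ and $y=v+\sum_{i\in I}pg^{\beta_i}$, and the triangle inequality in $H$ gives $\rho(x,y)\ge|x-y|_H$, i.e.\ the extension property. In short, the density estimate is obtained by changing the formula, not by rearranging occurrences, and the combinatorial lemma certifies the lower bound on $H$, not the density.
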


\begin{proof} The quotient group $G/H$ has prime exponent $p$ and thus
has a basis which can be written as $\{g^\alpha+H:\alpha<\mu\}$
for some ordinal $\mu$, see \cite[16.4]{F}. It will be convenient
to complete this basis by zero letting $g^\mu=0$. It follows that
any element of $G$ can be represented in the form
$x=u+\sum_{i\in\w}g^{\alpha_i}$ where $u\in H$ and the set $\{i\in
\w:\alpha_i\ne\mu\}$ is finite.  For any element $x\in G$ let
$
\mathrm{Rep}(x)=\{(u,(\alpha_{i})_{i\in\w})\in H\times [0,\mu]^\w:
x=u+\sum_{i\in\w}g^{\alpha_{i}} \}$. Observe that for any $x\in H$
and $(u,(\alpha_i)_{i\in\w})\in\mathrm{Rep}(x)$ the number $p$
divides the cardinality of the set $\{i\in\w:\alpha_i=\alpha\}$
for each ordinal $\alpha<\mu$.

Let $|\cdot|_H$ be a pseudonorm on $H$. Define a function
$\rho:G\times G\to[0,\infty)$ letting
\[
\rho(x,y)=\inf\{|u-v|_H+\sum_{i\in\w}|pg^{\alpha_{i}}-pg^{\beta_{i}}|_H:
(u,(\alpha_{i})_{i\in\w})\in\mathrm{Rep}(x),(v,(\beta_{i})_{i\in\w})\in
\mathrm{Rep}(y)\}
\]
for $x,y\in G$.

It is easy to see that $\rho$ is an invariant pseudometric on $G$.
Let us show that $d(G,\rho)\le d(H,|\cdot|_H)$. Let $D$ be a dense
subset of $(H,|\cdot|_H)$ with $|D|=d(H,|\cdot|_H)$ and
$I\subset[0,\mu]$ be a subset of size $|I|\le d(H,|\cdot|_H)$ such
that $I\ni\mu$ and the set $\{pg^\alpha:\alpha\in I\}$ is dense in
the subspace $\{pg^\alpha:\alpha<\mu\}$ of $(H,|\cdot|_H)$. Then
the set $E=\{x\in G: \mathrm{Rep}(x)\cap ({D}\times
I^\w)\not=\emptyset\}$ is a dense subset of $(G,\rho)$ with
$|E|\le d(H,|\cdot|_H)$. This proves the inequality $d(G,\rho)\le
d(H,|\cdot|_H)$.

It remains to show that $\rho(x,y)=|x-y|_H$ for any $x,y\in H$.
Fix arbitrary $(u,(\alpha_{i})_{i\in\w})\in\mathrm{Rep}(x)$,
$(v,(\beta_{i})_{i\in\w})\in\mathrm{Rep}(y)$. For every
$\alpha<\mu$ let $A(\alpha)=\{i\in\w:\alpha_i=\alpha\}$ and
$B(\alpha)=\{i\in\omega:\beta_i=\alpha\}$. Since $x,y\in H$, the
number $p$ divides the cardinalities of the sets $A(\alpha)$,
$B(\alpha)$ for all ordinals $\alpha<\mu$. Applying
Lemma~\ref{combi}, find a subset $I\subset\w$ such that $|C\cap
I|=\frac1p|C|$ for every nonempty subset
$C\in\{A(\alpha),B(\alpha):\alpha<\mu\}$. Then
\begin{eqnarray*}
x=&u+\sum_{i\in\w}g^{\alpha_i}=&u+\sum_{\alpha<\mu}|A(\alpha)|\cdot
g^\alpha=u+\sum_{\alpha<\mu}p|I\cap A(\alpha)|\,g^\alpha=\\
&&=u+\sum_{\alpha<\mu}\sum_{i\in I\cap
A(\alpha)}pg^{\alpha_i}=u+\sum_{i\in I}pg^{\alpha_i}.
\end{eqnarray*}
By analogy, $y=v+\sum_{i\in I}pg^{\beta_i}$. Consequently,
\begin{eqnarray*}
|u-v|_H+&\sum_{i\in\w}|pg^{\alpha_{i}}-pg^{\beta_{i}}|_H\geq
|u-v|_H+\sum_{i\in I}|pg^{\alpha_{i}}-pg^{\beta_{i}}|_H\geq
\\
&\geq |(u+\sum_{i\in I} pg^{\alpha_{i}})-(v+\sum_{i\in I}
pg^{\beta_{i}})|_H=|x-y|_H
\end{eqnarray*}
Passing to the infimum, we get $\rho(x,y)\geq|x-y|_H$. The proof
of the inverse inequality is straightforward, hence
$\rho(x,y)=|x-y|_H$. Letting $|x|_G=\rho(x,0)$ for $x\in G$ we
define a pseudonorm on $G$ extending the pseudonorm $|\cdot|_H$ so
that $d(G,|\cdot|_G)=d(H,|\cdot|_H)$.
\end{proof}

\begin{lemma} Let $H$ be a subgroup of a group $G$ such that the
quotient group $G/H$ is periodic. Then any pseudonorm $|\cdot|_H$
on $H$ can be extended to a pseudonorm $|\cdot|_G$ on $G$ so that
$d(G,|\cdot|_G)=d(H,|\cdot|_H)$.
\end{lemma}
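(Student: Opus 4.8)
The plan is to realize $G$ as the union of a countable increasing chain of subgroups starting at $H$, each consecutive quotient of which has prime exponent, and then to apply the previous lemma repeatedly along this chain, passing to the limit at the end. This matches the announced strategy of proving the theorem ``by induction whose inductive step is based on'' the prime-exponent lemma.

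First I would build the chain. Since $Q=G/H$ is periodic, every element of $Q$ has finite order, so $Q=\bigcup_{m\in\IN}Q[m!]$, where $Q[m!]=\{q\in Q:m!\,q=0\}$; the use of factorials makes this an \emph{increasing} chain. Let $H_m$ be the preimage of $Q[m!]$ in $G$, so that $H=H_0\subseteq H_1\subseteq\cdots$ and $\bigcup_m H_m=G$. Because $m\cdot g\in H_{m-1}$ whenever $g\in H_m$ (as $(m-1)!\,(mg)=m!\,g\in H$), each quotient $H_m/H_{m-1}$ has finite exponent dividing $m$. A group of finite exponent admits a finite filtration with prime-exponent quotients: for a group $Q'$ of exponent $e$ and a prime $p\mid e$ the descending chain $Q'\supseteq pQ'\supseteq p^2Q'\supseteq\cdots$ has prime-exponent consecutive quotients and terminates, after which one recurses on the remaining prime factors. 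Inserting such finite refinements between successive $H_{m-1}$ and $H_m$, I obtain a countable chain
\[
H=K_0\subseteq K_1\subseteq K_2\subseteq\cdots,\qquad \bigcup_{n\in\IN}K_n=G,
\]
in which $K_{n+1}/K_n$ has prime exponent, that is $pK_{n+1}\subseteq K_n$ for a prime $p=p_n$, for every $n$.

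Next I would iterate the previous lemma. Starting from $|\cdot|_{K_0}=|\cdot|_H$, applying that lemma to the pair $K_n\subseteq K_{n+1}$ extends $|\cdot|_{K_n}$ to a pseudonorm $|\cdot|_{K_{n+1}}$ on $K_{n+1}$ with $d(K_{n+1},|\cdot|_{K_{n+1}})=d(K_n,|\cdot|_{K_n})$. As each step preserves density, $d(K_n,|\cdot|_{K_n})=d(H,|\cdot|_H)$ for all $n$. Since the pseudonorms form a chain of extensions, the formula $|g|_G:=|g|_{K_n}$ (for any $n$ with $g\in K_n$) is well defined on $G=\bigcup_n K_n$ and yields a pseudonorm extending $|\cdot|_H$: the triangle inequality for any $x,y$ is checked inside a single $K_n$ containing both.

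Finally comes the density bookkeeping. Choosing for each $n$ a dense set $D_n\subseteq K_n$ with $|D_n|=d(H,|\cdot|_H)$, the countable union $\bigcup_n D_n$ is dense in $(G,|\cdot|_G)$, since every $g\in G$ lies in some $K_n$ on which $D_n$ is already dense and the metrics agree. Hence $d(G,|\cdot|_G)\le\aleph_0\cdot d(H,|\cdot|_H)=d(H,|\cdot|_H)$ whenever $d(H,|\cdot|_H)$ is infinite, while the reverse inequality holds because for pseudometrizable spaces density equals weight and weight is monotone under passing to the subspace $(H,|\cdot|_H)$. (If $d(H,|\cdot|_H)$ is finite, the per-step equalities force the finite metric quotients of the $K_n$ to stabilize, so that $G$ and $H$ have the same finite metric quotient, again giving equality.) The only genuinely delicate point is the algebraic construction of the chain, in particular the finite prime-exponent refinement of each step; once the chain is seen to be \emph{countable}, the limit pseudonorm and the density count are routine.
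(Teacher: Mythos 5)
Your proposal is correct and follows essentially the same route as the paper: reduce to a countable increasing chain of subgroups from $H$ to $G$ whose consecutive quotients have prime exponent, iterate the previous lemma along the chain, and take the union of the resulting pseudonorms and dense sets. The only difference is cosmetic: the paper builds the chain in one stroke by fixing a sequence of primes $(p_i)$ in which every prime occurs infinitely often and setting $H_{i+1}=\{x\in G:p_{i+1}x\in H_i\}$, which makes your factorial filtration and its finite prime-exponent refinement unnecessary.
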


\begin{proof} Let
$(p_i)_{i=1}^\infty$ be a sequence of prime numbers such that for
every prime number $p$ the set $\{i\in\IN:p_i=p\}$ is infinite.
Let $H_0=H$ and for $i\ge 0$ let
$H_{i+1}=\frac1{p_{i+1}}H_i=\{x\in G:p_{i+1}x\in H_i\}$. Because
of the periodicity of the quotient group $G/H$ we get
$G=\bigcup_{i=1}^\infty H_i$.

Let $|\cdot|_H$ be any pseudonorm on $H$ and $D_0$ be a dense
subset of the topological group $(H,|\cdot|_H)$ with
$|D_0|=d(H,|\cdot|_H)$. Let $|\cdot|_0=|\cdot|_H$. Using the
previous lemma, by induction for every $i\ge 1$ find a pseudonorm
$|\cdot|_i$ on the group $H_i$ and a dense subset $D_i$ of the
topological group $(H_i,|\cdot|_i)$ such that $|x|_i=|x|_{i-1}$
for each $x\in H_{i-1}$ and $|D_i|=|D_{i-1}|$.

Completing the inductive construction, define a pseudonorm
$|\cdot|_G$ on the group $G$ letting $|x|_G=|x|_i$ where $x\in
H_i$. It is clear that $|\cdot|_G$ extends $|\cdot|_H$ and
$D=\bigcup_{i=1}^\infty D_i$ is a dense set in the topological
group $(G,|\cdot|_G)$ with $|D|=|D_0|=d(H,|\cdot|_H)$. This yields
$d(G,|\cdot|_G)\le d(H,|\cdot|_H)$.
\end{proof}

Finally we are able to complete the
\smallskip

\noindent {\em Proof of Theorem~\ref{main}.} Let $H$ be a subgroup
of a group $G$ and $|\cdot|_H$ be a pseudonorm on $H$. According
to \cite[24.1]{F} the group $H$ is a subgroup of a divisible group
$E$. Moreover, according to Lemma 24.3 \cite{F} we can assume that
the quotient group $E/H$ is periodic. Applying the previous lemma,
extend the pseudonorm $|\cdot|_H$ to a pseudonorm $|\cdot|_E$ on
$E$ so that $d(E,|\cdot|_E)=d(H,|\cdot|_H)$. According to Baer
Theorem \cite[21.1]{F}, each divisible group is injective.
Consequently, there is a group homomorphism $h:G\to E$ extending
the identity map $H\to H\subset E$. Define a pseudonorm
$|\cdot|_G$ on $G$ letting $|x|_G=|h(x)|_E$ for $x\in G$ and
observe that $|\cdot|_G$ extend $|\cdot|_H$ and $d(H,|\cdot|_H)\le
d(G,|\cdot|_G)\le d(E,|\cdot|_E)=d(H,|\cdot|_H)$. \qed \smallskip

\section{Proof of Theorem~\ref{exttop}}

Let $H$ be a subgroup of a group $G$ and $\tau_H$ be a Hausdorff
group topology on $H$.

First we define a Hausdorff group topology $\tau_{G/H}$ on the
quotient group $G/H$ such that $ib(G/H,\tau_{G/H})\le \omega$ and
$\chi(G/H,\tau_{G/H})\le\log |G/H|$. By \cite[24.1]{F} $G/H$ is a
subgroup of a divisible group $E$ with $|E|=|G/H|$. Applying
Theorem 23.1 \cite{F} (on the structure of divisible groups), we
can show that the group $E$ is isomorphic to a subgroup of the
power $\IT^\kappa$ of the circle $\IT=\IR/\IZ$ where $\kappa=\log
|G/H|\le\log |G|$. Observe that $\IT^\kappa$ endowed with the
natural Tychonov product topology is a compact topological group
with $ib(\IT^\kappa)=\omega$ and
$\chi(\IT^\kappa)=\kappa\le\log|G|$.

Consequently, the group $G/H$, being isomorphic to a subgroup of
$\IT^\kappa$, carries a Hausdorff group topology $\tau_{G/H}$ such
that $ib(G/H,\tau_{G/H})\le \omega$ and $\chi(G/H,\tau_{G/H})\le
\kappa\le \log |G|$.

Fix a neighborhood base $\mathcal B$ of size $|\mathcal
B|=\chi(H,\tau_H)$ at the origin of the topological group
$(H,\tau_H)$. Applying \cite[2.3]{Tk}, for every $U\in\mathcal B$
fix a continuous pseudonorm $|\cdot|_U$ on $H$ such that $\{x\in
H:|x|_U<1\}\subset U$. By Theorem~\ref{main}, the pseudonorm
$|\cdot|_U$ can be extended to a pseudonorm $\|\cdot\|_U$ on $G$
such that $d(G,\|\cdot\|_U)=d(H,|\cdot|_U)$. The continuity of the
identity map $(H,\tau_H)\to (H,|\cdot|_U)$ implies that
$ib(H,|\cdot|_H)\le ib(H,\tau_H)$, see \cite[3.2]{Tk}. Since the
density and the boundedness index coincide for (pseudo)metrizable
topological groups \cite[\S3]{Tk}, we conclude that
$ib(G,\|\cdot\|_U)=d(G,\|\cdot\|_U)=
d(H,|\cdot|_U)=ib(H,|\cdot|_U)\le ib(H,\tau_H)$.

Let $\tau_G$ be the smallest topology on $G$ making continuous the
quotient homomorphism $(G,\tau_G)\to (G/H,\tau_{G/H})$ and the
identity map $(G,\tau)\to (G,\|\cdot\|_U)$ for all $U\in\mathcal
B$. It is easy to see that $\tau_G$ is a Hausdorff group topology
on $G$ inducing the topology $\tau_H$ on the subgroup $H$.

Observe that the topological group $(G,\tau_G)$ can be identified
with a subgroup of the product $G/H\times\prod_{U\in\mathcal B}
(G,\|\cdot\|_U)$ of topological groups whose boundedness indices
do not exceed $ib(H,\tau_H)$ and characters do not exceed
$\log|G|$. According to \cite[3.2]{Tk}, the boundedness index of
such a product does dot exceed $ib(H,\tau_H)$ while its character
does not exceed $\chi(G/H)\cdot|\mathcal B|\le
\log|G|\cdot\chi(H,\tau_H)$. Consequently, $ib(G,\tau_G)\le
ib(H,\tau_H)$, $\chi(G,\tau_G)\le \chi(H,\tau_H)\cdot \log|G|$,
and $w(G,\tau_G)=\chi(G,\tau_G)\cdot ib(G,\tau_G)\le
\log|G|\cdot\chi(H,\tau_H)\cdot
ib(H,\tau_H)=w(H,\tau_H)\cdot\log|G|$.

\smallskip
\smallskip

\noindent{\bf Acknowledgement.} The authors express their sincere
thanks to Sasha Ravsky and Igor Protasov for valuable and
stimulating discussions (concerning stochastic matrices).
\newpage


\begin{thebibliography}{}

\bibitem[A]{A}
M.~Aigner. Combinatorial Theory. -- Springer-Verlag, 1979.

\bibitem[BRZ]{BPZ}
T.~Banakh, A.~Plichko, A.~Zagorodnyuk. Automatic continuity of
polynomial operators between topological abelian groups (in
preparation).

\bibitem[Bi]{Bi} G.~Birkhoff, Tres observaciones sobre el algebra lineal //
Rev. Univ. Nac Tucuma\'an. ser. A. -- 1946. -- V.5. --P.147-151.

\bibitem[En]{En}
R.~Engelking. {\em General Topology}. -- Warszawa: PWN, 1985.

\bibitem[F]{F} L. Fuchs. {\em Infinite Abelian Groups, I}. -- New
York: Academic Press, 1970.

\bibitem[Ga]{Ga} F.R.~Gantmacher. {\em Matrix Theory}. -- Moscow:
Nauka, 1967 (in Russian). P.556.

\bibitem[Gu]{Gu}
I.~Guran, On topological groups close to being Lindel\"of, {\em
Soviet Math. Dokl}. -- 1981. -- V.23. -- P.173--175.

\bibitem[JW]{JW}
W.~Just, M.~Weese, {\em Discovering Modern Set Theory, I} (GSM,
Vol.8. -- Providence, RI: Am. Math. Soc., 1996.

\bibitem[Ke]{Ke}
A.S.~Kechris. {\em Classical descriptive set theory}. -- Berlin:
Springer-Verlag, 1995.

\bibitem[PZ]{PZ} A.~Plichko, A.~Zagorodnyuk. Isotropic mappings
and automatic continuity of polynomial, analytic, and convex
operators // in: {\em General Topology in Banach Spaces} (T.Banakh
ed.). -- New York: Nova Publ., 2001. -- P.1--13.

\bibitem[Tk]{Tk}
M.~Tkachenko, Introduction to topological groups, {\em Topology
Appl}. -- 1998. -- V.86. -- P.179--231.

\end{thebibliography}
\end{document}